\newtheorem {theorem} {Theorem}
\newtheorem {proposition} [theorem]{Proposition}
\newtheorem {remark} [theorem]{Remark}
\newtheorem {definition} [theorem]{Definition}
\newcommand{\R}{\mathbb{R}}
\def \e {\varepsilon}
\begin{document}

\title[Flow Curvature Method applied to Canard Explosion]
{Flow Curvature Method\\ applied to Canard Explosion}

\author[J.M. Ginoux, J. Llibre]
{Jean-Marc Ginoux$^1$ and Jaume Llibre$^2$}

\address{$^1$ Laboratoire {\sc Protee}, I.U.T. de Toulon,
Universit\'{e} du Sud, BP 20132, F-83957 La Garde cedex, France}
\email{ginoux@univ-tln.fr}

\address{$^2$ Departament de Matem\`{a}tiques,
Universitat Aut\`{o}noma de Barcelona, 08193 Bellaterra, Barcelona,
Spain} \email{jllibre@mat.uab.cat}

\subjclass{}

\keywords{Geometric Singular Perturbation Method, Flow Curvature
Method, singularly perturbed dynamical systems, canard solutions}

\begin{abstract}
The aim of this work is to establish that the bifurcation parameter
value leading to a \textit{canard explosion} in dimension two
obtained by the so-called \textit{Geometric Singular Perturbation
Method} can be found according to the \textit{Flow Curvature
Method}. This result will be then exemplified with the classical Van
der Pol oscillator.
\end{abstract}

\maketitle

\section{Introduction}

The classical geometric theory of differential equations developed
originally by Andronov \cite{Andro1}, Tikhonov \cite{Tikh} and
Levinson \cite{Lev} stated that \textit{singularly perturbed
systems} possess \textit{invariant manifolds} on which trajectories
evolve slowly, and toward which nearby orbits contract exponentially
in time (either forward or backward) in the normal directions. These
manifolds have been called asymptotically stable (or unstable)
\textit{slow invariant manifolds}\footnote{In other articles the {\it slow manifold} is the approximation of order $O(\e)$ of the {\it slow invariant manifold}.}. Then, Fenichel \cite{Fen5,Fen6,Fen7,Fen8}
theory\footnote{The theory of invariant manifolds for an ordinary  differential equation is based on the work of Hirsch, \textit{et al.} \cite{Hirsch}} for the \textit{persistence of normally hyperbolic invariant manifolds} enabled to establish the \textit{local invariance} of \textit{slow invariant manifolds} that possess
both expanding and contracting directions and which were labeled \textit{slow invariant manifolds}.

\smallskip

During the last century, various methods have been developed to compute the \textit{slow invariant manifold} or, at least an asymptotic expansion in power of $\e$.

\smallskip

The seminal works of Wasow \cite{Wasow}, Cole \cite{Cole}, O'Malley \cite{Malley1,Malley2} and Fenichel \cite{Fen5,Fen6,Fen7,Fen8} to name but a few, gave rise to the so-called \textit{Geometric
Singular Perturbation Method}. According to this theory, existence as well as local invariance of the \textit{slow invariant manifold} of \textit{singularly perturbed systems} has been stated. Then, the
determination of the \textit{slow invariant manifold} equation turned into a regular perturbation problem in which one
generally expected the asymptotic validity of such expansion to breakdown \cite{Malley2}.

\smallskip

Recently a new approach of $n$-dimensional singularly perturbed dynamical systems of ordinary differential equations with two time scales, called \textit{Flow Curvature Method} has been developed \cite{Gin}. In dimension two and three, it consists in considering the \textit{trajectory curves} integral of such systems as \textit{plane} or \textit{space} curves. Based on the use of local metrics properties of \textit{curvature} (\textit{first curvature})
and \textit{torsion} (\textit{second curvature}) resulting from the \textit{Differential Geometry}, this method which does not require the use of asymptotic expansions, states that the location of the points where the local \textit{curvature} (resp. \textit{torsion}) of \textit{trajectory curves} of such systems, vanishes, directly provides an approximation of the \textit{slow invariant manifold} associated with two-dimensional (resp. three-dimensional) \textit{singularly perturbed systems} up to suitable order $O(\e^2)$ (resp. $O(\e^3)$). This method gives an implicit non intrinsic equation, because it depends on the euclidean metric.

\smallskip

Solutions of ``canard'' type have been discovered by a group of French mathematicians \cite{BenCallDinDin} in the beginning of the eighties while they were studying relaxation oscillations in the classical Van der Pol's equation (with a constant forcing term) \cite{VdP}. They observed, within a small range of the control parameter, a fast transition for the amplitude of the limit cycle varying suddenly from small amplitude to a large amplitude. Due to the fact that the shape of the limit cycle in the phase plane looks as a duck they called it ``canard cycle''. Hence, they named this new phenomenon ``canard explosion\footnote{According to Krupa and Szmolyan \cite[p. 312]{KrupSzmo} this terminology has been introduced in chemical and biological literature by Br$\varnothing$ns and Bar-Eli \cite[p. 8707]{Brons1991} to denote a sudden change of amplitude and period of oscillations under a very small range of control parameter.}'' and triggered a ``duck-hunting''.

\smallskip

Many methods have been developed to analyze ``canard'' solution such as nonstandard analysis \cite{BenCallDinDin, Diener}, matched asymptotic expansions \cite{Eckhaus}, or the blow-up technique \cite{DumoRouss, KrupSzmo,SzmoWechs} which extends the \textit{Geometric Singular Perturbation Method} \cite{Fen5,Fen6,Fen7,Fen8}.

\smallskip
Meanwhile, two other geometric approaches have been proposed. The first, elaborated by \cite{Brons1994} involves \textit{inflection curves}, while the second makes use of the \textit{curvature} of the trajectory curve, integral of any $n$-dimensional singularly perturbed dynamical system \cite{GiRo3,Gin}. This latter, entitled \textit{Flow Curvature Method} will be used in this work in order to compute the bifurcation parameter value leading to
a \textit{canard explosion}. Moreover, the total correspondence between the results obtained in this paper for two-dimensional singularly perturbed dynamical systems such as Van der Pol oscillator and those previously established by \cite{BenCallDinDin} will enable to highlight another link between the \textit{Flow Curvature Method} and the \textit{Geometric Singular Perturbation Method}.

\section{Singularly perturbed systems}

According to Tikhonov \cite{Tikh}, Takens \cite{Tak}, Jones \cite{Jones} and Kaper \cite{Kaper}
\textit{singularly perturbed systems} may be defined such as:

\begin{equation}
\label{eq1}
\begin{array}{*{20}c}
 {{\vec {x}}' = \vec {f}\left( {\vec {x},\vec {z},\varepsilon }
\right),\mbox{ }} \hfill \\
 {{\vec {z}}' = \varepsilon \vec {g}\left( {\vec {x},\vec {z},\varepsilon }
\right)}. \hfill \\
\end{array}
\end{equation}

where $\vec {x} \in \mathbb{R}^m$, $\vec {z} \in \mathbb{R}^p$,
$\varepsilon \in \mathbb{R}^ + $, and the prime denotes
differentiation with respect to the independent variable $t$. The
functions $\vec {f}$ and $\vec {g}$ are assumed to be $C^\infty$
functions\footnote{In certain applications these functions will be
supposed to be $C^r$, $r \geqslant 1$.} of $\vec {x}$, $\vec {z}$
and $\varepsilon$ in $U\times I$, where $U$ is an open subset of
$\mathbb{R}^m\times \mathbb{R}^p$ and $I$ is an open interval
containing $\varepsilon = 0$.

\smallskip

In the case when $0 < \varepsilon \ll 1$, i.e., is a small positive number, the variable $\vec {x}$ is called \textit{fast} variable, and $\vec {z}$ is called \textit{slow} variable. Using Landau's notation: $O\left( {\varepsilon^k} \right)$ represents a function $f$ of $x$ and $\varepsilon $ such that $f(u,\e)/\e^k$ is bounded for positive $\e$  going to zero, uniformly for $u$ in the given domain.

\newpage

It is used to consider that generally $\vec {x}$ evolves at an $O\left( 1 \right)$ rate; while $\vec {z}$ evolves at an $O\left( \varepsilon \right)$ \textit{slow} rate. Reformulating system (\ref{eq1}) in terms of the rescaled variable $\tau = \varepsilon t$, we obtain

\begin{equation}
\label{eq2}
\begin{aligned}
\varepsilon \dot {\vec {x}} & = \vec{f} \left( {\vec{x},\vec{z},
\varepsilon} \right), \\
\dot {\vec {z}} & = \vec {g}\left( {\vec{x}, \vec{z},\varepsilon }
\right).
\end{aligned}
\end{equation}

The dot represents the derivative with respect to the new independent variable $\tau $.

\smallskip

The independent variables $t$ and $\tau $ are referred to the \textit{fast} and \textit{slow} times, respectively, and (\ref{eq1}) and (\ref{eq2}) are called the \textit{fast} and \textit{slow} systems, respectively. These systems are equivalent whenever $\varepsilon \ne 0$, and they are labeled \textit{singular perturbation problems} when $0 < \varepsilon \ll 1$. The label ``singular'' stems in part from the discontinuous limiting behavior
in system (\ref{eq1}) as $\varepsilon \to 0$.

\smallskip

In such case system (\ref{eq2}) leads to a differential-algebraic system called \textit{reduced slow system} whose dimension decreases from $m + p = n$ to $p$. Then, the \textit{slow} variable $\vec {z} \in \mathbb{R}^p$ partially evolves in the submanifold $M_0$ called the \textit{critical manifold}\footnote{It corresponds to the approximation of the slow invariant manifold, with an error of $O(\e)$.} and defined by

\begin{equation}
\label{eq3} M_0 := \left\{ {\left( {\vec {x},\vec {z}} \right):\vec
{f}\left( {\vec {x},\vec {z},0} \right) = {\vec {0}}} \right\}.
\end{equation}

When $D_xf$ is invertible, thanks to implicit function theorem, $M_0 $ is given by the graph of a $C^\infty $ function $\vec {x} = \vec {F}_0 \left( \vec {z} \right)$ for $\vec {z} \in D$, where $D\subseteq \mathbb{R}^p$ is a compact, simply connected domain and the boundary of D is an $(p - 1)$--dimensional $C^\infty$ submanifold\footnote{The set D is overflowing invariant with respect to (\ref{eq2}) when $\varepsilon = 0$.}.

\smallskip

According to Fenichel theory \cite{Fen5, Fen6, Fen7, Fen8} if $0 < \varepsilon \ll 1$ is sufficiently small, then there exists a function $\vec {F}\left( {\vec {z},\varepsilon } \right)$ defined on D such that the manifold

\begin{equation}
\label{eq4} M_\varepsilon := \left\{ {\left( {\vec {x},\vec {z}}
\right):\vec {x} = \vec {F}\left( {\vec {z},\varepsilon } \right)}
\right\},
\end{equation}

is locally invariant under the flow of system (\ref{eq1}). Moreover, there exist perturbed local stable (or attracting) $M_a$ and unstable (or repelling) $M_r$ branches of the \textit{slow invariant manifold} $M_\varepsilon$. Thus, normal hyperbolicity of $M_\e$ is lost via a saddle-node bifurcation of the \textit{reduced slow system} (\ref{eq2}).

\begin{definition}
A ``canard'' is a solution of a singularly perturbed dynamical system following the \textit{attracting} branch $M_a$ of the \textit{slow invariant manifold}, passing near a bifurcation point located on the fold of the \textit{critical manifold}, and then following the \textit{repelling} branch $M_r$ of the \textit{slow invariant manifold} during a considerable amount of time.

\end{definition}

Geometrically a \textit{maximal canard} corresponds to the intersection of the attracting and repelling branches $M_a \cap M_r$ of the slow manifold in the vicinity of a non-hyperbolic point. Canards are a special class of solution of singularly perturbed dynamical systems for which normal hyperbolicity is lost.

\section{Geometric Singular Perturbation Method}

Earliest geometric approaches to \textit{singularly perturbed
dynamical systems} have been developed by Cole \cite{Cole}, O'Malley
\cite{Malley1,Malley2}, Fenichel \cite{Fen5,Fen6,Fen7,Fen8} for the
determination of the \textit{slow manifold} equation.

\smallskip

\textit{Geometric Singular Perturbation Method} is based on the
following assumptions and theorem stated by Nils Fenichel in the
middle of the seventies\footnote{For an introduction to Geometric
Singular Perturbation Method see \cite{Kaper}.}.

\subsection{Assumptions}
\begin{enumerate}

\item[$(H_1)$]
Functions $\vec {f}$ and $\vec {g}$ are $C^\infty $ functions in
$U\times I$, where $U$ is an open subset of ${\mathbb R}^m\times
{\mathbb R}^p$ and $I$ is an open interval containing $\varepsilon =
0$.

\item[$(H_2)$] There exists a set $M_0 $ that is contained in $\{\left( {\vec
{x},\vec {z}} \right):\vec {f}\left( {\vec {x},\vec {z},0}
\right)=0\}$ such that $M_0 $ is a compact manifold with boundary
and $M_0 $ is given by the graph of a $C^1$ function $\vec {x}=\vec
{F}_0 \left( {\vec {z}} \right)$ for $\vec {z}\in D$, where
$D\subseteq {\mathbb R}^p$ is a compact, simply connected domain and
the boundary of $D$ is an $(p-1)$--dimensional $C^\infty $
submanifold. Finally, the set $D$ is overflowing invariant with
respect to (\ref{eq2}) when $\varepsilon =0$.

\item[$(H_3)$] $M_0 $ is normally hyperbolic relative to the
\textit{reduced fast system} and in particular it is required for
all points $\vec {p}\in M_0 $, that there are $k$ (resp. $l)$
eigenvalues of $D_{\vec {x}} \vec {f}\left( {\vec {p},0} \right)$
with positive (resp. negative) real parts bounded away from zero,
where $k+l=m$.

\end{enumerate}

\begin{theorem}[Fenichel's Persistence Theorem]\label{th3.1}
Let system \eqref{eq1} satisfying the conditions $(H_{1}) -
(H_{3})$. If $\varepsilon >0$ is sufficiently small, then there
exists a function $\vec {F}\left( {\vec {z},\varepsilon }\right)$
defined on $D$ such that the manifold $M_\varepsilon =\{\left( {\vec
{x},\vec {z}}\right):\vec{x}=\vec {F}\left( {\vec {z},\varepsilon }
\right)\}$ is locally invariant under \eqref{eq1}. Moreover, $\vec
{F}\left( {\vec{z},\varepsilon } \right)$ is $C^r$, and $M_\varepsilon $ is $C^r$ $O\left(\varepsilon
\right)$ close to $M_0 $. In addition, there exist perturbed local
stable and unstable manifolds of $M_\varepsilon $. They are unions
of invariant families of stable and unstable fibers of dimensions
$l$ and $k$, respectively, and they are $C^r$ $O\left(
\varepsilon\right)$ close to their counterparts.
\end{theorem}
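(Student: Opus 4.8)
The plan is to recognize Theorem~\ref{th3.1} as a special case of the persistence theorem for normally hyperbolic invariant manifolds of Hirsch, Pugh and Shub \cite{Hirsch}, rather than to build a bespoke fixed-point argument from scratch. First I would pass to the extended autonomous system on $U\times I$ obtained by adjoining the trivial equation $\e'=0$ to \eqref{eq1}, so that $\e$ becomes a frozen phase variable and the hyperplane $\{\e=0\}$ is invariant. Inside that hyperplane the reduced fast flow $\vec{x}'=\vec{f}(\vec{x},\vec{z},0)$, $\vec{z}'=0$ has, by $(H_2)$, the whole graph $\mathcal{M}_0=\{(\vec{F}_0(\vec{z}),\vec{z},0):\vec{z}\in D\}$ as a compact manifold of equilibria; note that $(H_3)$ forces $D_{\vec{x}}\vec{f}$ to be invertible along $M_0$, so $\vec{F}_0$ is in fact $C^\infty$ by the implicit function theorem. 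The object that must persist is therefore $\mathcal{M}_0$, viewed as an invariant manifold of the extended flow, and the perturbation parameter $\e$ has been absorbed into the geometry.

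Next I would verify the hypotheses of the abstract theorem. Along $\mathcal{M}_0$ the tangent bundle consists of the slow ($\vec{z}$) directions, on which the extended flow is stationary, while the normal bundle splits as $E^s\oplus E^u\oplus E^c$, where $E^s$ ($l$-dimensional) and $E^u$ ($k$-dimensional) come from the eigenvalues of $D_{\vec{x}}\vec{f}(\vec{p},0)$ with negative and positive real part supplied by $(H_3)$, and $E^c$ is the one-dimensional $\e$-direction. Because the tangential and center rates vanish identically while the hyperbolic normal rates are bounded away from zero, $\mathcal{M}_0$ is $r$-normally hyperbolic for every finite $r$; this uniform spectral gap is exactly what will later deliver the $C^r$ conclusion. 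To accommodate the fact that $M_0$ is only a manifold \emph{with boundary} (so orbits may leave through $\partial D$ and $\mathcal{M}_0$ is merely overflowing invariant, as recorded in $(H_2)$), I would modify $\vec{f}$ and $\vec{g}$ by a smooth cutoff outside a neighborhood of $\mathcal{M}_0$ to render the manifold genuinely invariant, apply the compact persistence theorem, and then restrict to the interior region where no modification was made. The theorem produces a locally invariant center manifold $\mathcal{W}=\{(\vec{F}(\vec{z},\e),\vec{z},\e)\}$, of class $C^r$, containing $\mathcal{M}_0$ and tangent to $E^c$ along it; slicing by the invariant level $\{\e=\text{const}\}$ yields $M_\e=\{\vec{x}=\vec{F}(\vec{z},\e)\}$, locally invariant under \eqref{eq1}. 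Since $\mathcal{W}$ meets $\{\e=0\}$ exactly in $\mathcal{M}_0$, so that $\vec{F}(\vec{z},0)=\vec{F}_0(\vec{z})$, a Taylor expansion in $\e$ gives the $C^r$ $O(\e)$-closeness of $M_\e$ to $M_0$; equivalently, $\vec{F}$ solves the invariance equation
\[
D_{\vec{z}}\vec{F}(\vec{z},\e)\,\e\,\vec{g}\bigl(\vec{F}(\vec{z},\e),\vec{z},\e\bigr)=\vec{f}\bigl(\vec{F}(\vec{z},\e),\vec{z},\e\bigr),
\]
whose right-hand side vanishes at $\e=0$ precisely on $M_0$.

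Finally, the stable and unstable manifolds of $M_\e$ come from the same machinery: the stable/unstable manifold theorem for normally hyperbolic invariant manifolds attaches to $\mathcal{W}$ invariant families of $C^r$ stable and unstable fibers of dimensions $l$ and $k$, and slicing by $\{\e=\text{const}\}$ gives the perturbed local stable and unstable manifolds of $M_\e$, $C^r$ $O(\e)$-close to the fast fibers of $M_0$. I expect the genuine obstacle to be the boundary: verifying that the cutoff modification can be chosen so as to preserve $r$-normal hyperbolicity, that the resulting persistent manifold is independent of the cutoff over the interior, and that all estimates are uniform as $\e\to0^+$. The hyperbolic structure itself is automatic here because of the enormous separation between the $O(1)$ fast normal rates and the $O(\e)$ slow tangential rates, so the delicate work is entirely in the treatment of $\partial D$ and in tracking the uniformity of constants, not in establishing hyperbolicity.
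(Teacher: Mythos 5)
Your outline is essentially the argument behind the paper's (purely referential) proof: the paper defers entirely to \cite{Fen5}, \cite{Jones} and \cite{Kaper}, and those sources establish the theorem exactly as you describe, by adjoining $\varepsilon'=0$, viewing the critical manifold as an $r$-normally hyperbolic invariant manifold of the extended flow in the sense of Hirsch--Pugh--Shub \cite{Hirsch}, and extracting the slow manifold and its stable/unstable fibers from the persistence and fibration theorems. The only cosmetic difference is that Fenichel's own theorems are formulated directly for overflowing (and inflowing) invariant manifolds with boundary, so the cutoff step you propose is replaced there by a version of the persistence theorem adapted to that setting; you correctly identify the treatment of $\partial D$ and the uniformity of constants as $\varepsilon\to 0^+$ as the only genuinely delicate points.
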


\begin{proof}
See \cite{Fen5}, \cite{Jones} and \cite{Kaper}.
\end{proof}

\subsection{Invariance}

Generally, Fenichel theory enables to turn the problem for explicitly finding functions $\vec {x}=\vec {F}\left( {\vec
{z},\varepsilon } \right)$ whose graphs are locally \textit{slow invariant manifolds} $M_\varepsilon $ of system (\ref{eq1}) into regular perturbation problem. Invariance of the manifold
$M_\varepsilon $ implies that $\vec {F}\left( {\vec {z},\varepsilon} \right)$ satisfies:

\begin{equation}
\label{eq5} \varepsilon D_{\vec {z}} \vec {F}\left( {\vec
{z},\varepsilon } \right)\vec {g}\left( {\vec {F}\left( {\vec
{z},\varepsilon } \right),\vec {z},\varepsilon } \right)=\vec
{f}\left( {\vec {F}\left( {\vec {z},\varepsilon } \right),\vec
{z},\varepsilon } \right).
\end{equation}

Then, plugging the perturbation expansion:

\[
\vec {F}\left( {\vec {z},\varepsilon } \right) = \sum_{i=0}^{N-1}\vec {F}_i \left(
{\vec {z}} \right)\e^i+ O\left( {\varepsilon^N} \right)
\]

into (\ref{eq5}) enables to solve order by order for $\vec {F}\left( {\vec {z},\varepsilon } \right)$.

\newpage

Taylor series expansion for $\vec {f}\left( {\vec {F}\left( {\vec
{z},\varepsilon } \right),\vec {z},\varepsilon } \right)$ up to
terms of order two in $\varepsilon $ leads at order $\varepsilon ^0$
to

\begin{equation}
\label{eq6} \vec {f}\left( {\vec {F}_0 \left( {\vec {z}}
\right),\vec{z},0} \right)=\vec {0}
\end{equation}

which defines $\vec {F}_0 \left( {\vec {z}} \right)$ due to the invertibility of $D_{\vec {x}} \vec {f}$ and the \textit{Implicit Function Theorem}.

\smallskip

At order $\varepsilon ^1$ we have:

\begin{equation}
\label{eq7} D_{\vec {z}} \vec {F}_0 \vec {g}\left( {\vec {F}_0,\vec
{z},0} \right) = D_{\vec {x}} \vec {f}\left( {\vec {F}_0,\vec {z},0}
\right)\vec {F}_1 + \frac{\partial \vec {f}}{\partial \varepsilon
}\left( {\vec {F}_0,\vec {z},0} \right),
\end{equation}

which yields $\vec {F}_1 \left( {\vec {z}} \right)$ and so forth.

\begin{equation}
\label{eq8} D_{\vec {x}} \vec {f}\left( {\vec {F}_0,\vec{z},0}
\right)\vec {F}_1 = D_{\vec {z}} \vec {F}_0 \vec {g}\left( {\vec
{F}_0, \vec {z},0} \right)-\dfrac{\partial \vec {f}}{\partial
\varepsilon}\left( {\vec {F}_0, \vec {z},0} \right).
\end{equation}

So, regular perturbation theory enables to build locally
\textit{slow invariant manifolds} $M_\varepsilon $. But for
high-dimensional \textit{singularly perturbed systems slow invariant
manifold} asymptotic equation determination leads to tedious calculations.

\begin{proof}
For application of this technique see \cite{Fen8}.
\end{proof}

\subsection{Slow invariant manifold and canards}

A manifold of canards is an invariant manifold, where first approximation is $M_0$.
For two-dimensional singularly perturbed dynamical systems with just one fast variable (\textit{x}) and one slow variable (\textit{y}), canards are non generic according to Krupa and Szmolyan \cite{KrupSzmo} and \textit{maximal canards} can only occur in such systems only for discrete values of a control parameter $\mu$. It means that in dimension two a one parameter family of singularly perturbed systems is needed to exhibit canard phenomenon. Because along a canard, the differential $D_xf$ is not always invertible, we can not write the manifold of canards as $x=F(y,\e)$. Thus, we will suppose that $D_yf$ is invertible and we will try to compute the canard as $y=F(x,\mu,\e)$. See \cite{Canalis} for a theory of this identification of formal series. We consider the following \textit{singularly perturbed dynamical system}:

\[
\begin{aligned}
\varepsilon \dot {x} & = f(x,y,\mu,\e), \\
\dot{y} & = g(x,y,\mu,\e), \\
\end{aligned}
\]

with $x,y\in \R$, i.e. $(m,p) = (1,1)$ and we suppose that due to the nature of the
problem perturbation expansions of the canard and of the canard value read:

\[
y = F(x, \varepsilon) = \sum_{i=0}^{N-1} F_i(x)\e^i + O\left( \varepsilon^N \right)
\quad \mbox{and} \quad
\mu(\e) = \sum_{i=0}^{N-1}\mu_i\e^i + O\left( \varepsilon^N \right).
\]

According to Eq. (\ref{eq5}) invariance of the manifold $M_{\varepsilon}$ reads:

\begin{equation}
\label{eq9} \left( \dfrac{\partial F}{\partial x}(x,\e) \right) f\left(
x,F(x,\e),\mu(\e),\e \right) = \varepsilon g\left(x, F(x,\e),\mu(\e),\e \right).
\end{equation}

\smallskip

To avoid technical complications in the computations below, we assume that, at order $O(\e^0)$, the critical manifold does not depend on the parameter $\mu$.

Indeed,

\[
\dfrac{\partial f}{\partial \mu}(x,F_0(x),\mu_0,0)=0
\]

Then, solving equation (\ref{eq9}) order by order provides at:

\smallskip

\textbf{Order $\varepsilon^0$}

\begin{equation}
\label{eq10}
\dfrac{\partial F_0}{\partial x}(x) f(x, F_0(x),\mu_0,0) = 0
\quad \Leftrightarrow \quad  f(x, F_0(x),\mu_0,0)=0.
\end{equation}

because the function $\dfrac{\partial F_0}{\partial x}(x)$ is almost everywhere non zero. Indeed, the function $F_0$ is given by the implicit function theorem. In what follows $f$, $g$, and their derivatives are evaluated at $(x, F_0(x), \mu_0, 0)$, and $F_0$, $F_1$ and $F_2$ are evaluated at $x$.\\

\smallskip

\textbf{Order $\varepsilon^1$}

\[
F_0'\left(\dfrac{\partial f}{\partial y}F_1+\dfrac{\partial f}{\partial \mu}\mu_1+\dfrac{\partial f}{\partial \e}\right)+F_1'f=g.
\]

Since according to what has been stated before, we have:

\begin{equation}
\label{eq11} F_1 = \dfrac{\dfrac{g}{F_0'}-\dfrac{\partial f}{\partial \e}}{\dfrac{\partial f}{\partial y}}
\end{equation}

A priori, this function is singular at the bifurcation point $x_0$ of the fast system, because $F_0'$ vanishes at this point. To avoid this singularity in function $F_1$, the relation $g(x_0,F(x_0),\mu_0,0)=0$ is needed. Whith an appropriate hypothesis on $\dfrac{\partial g}{\partial \mu}$, it  gives a value for $\mu_0$.\\

\smallskip

\textbf{Higher order}
The computation can be done with the same arguments. When condition of order $k$ are studied, we have to fix $F_k$, and to avoid singularity in $F_k$ we have to fix $\mu_{k-1}$. An example will be done in the next paragraph.

\subsection{Van der Pol's ``canards''}
Van der Pol system

\begin{equation}
\label{eq12}
\begin{aligned}
\varepsilon \dot {x} & = f(x,y) = x + y - \dfrac{x^3}{3}, \\
\dot{y} & = g(x,y) = \mu - x,
\end{aligned}
\end{equation}

satisfies Fenichel\index{Fenichel}'s assumptions $(H_1)-(H_3)$ except on the points $(x,y)=\pm(1,-\frac{2}{3})$.
The critical manifold is the cubic $y=x^3/3-x$. Thus, the problem is to find a function $y = F(x, \varepsilon)$
whose graph is locally the \textit{slow invariant manifold} $M_\varepsilon$ of the Van der Pol system. We write:

\[
F(x,\varepsilon) = F_0(x) + \varepsilon F_1 (x)+ \varepsilon^2 F_2
(x) + O\left( {\varepsilon ^3} \right)
\]

and

\hspace{3.5cm} $\mu = \mu_0 + \varepsilon \mu_1  + \varepsilon^2 \mu_2  + O\left({\varepsilon ^3} \right)$.\hfill

\newpage

The identification we have to perform is

\[
\sum_{i=0}\dfrac{\partial F_i}{\partial x}\e^i\left(\sum_{i=0}F_i\e^i-\left(\frac{x^3}{3}-x\right)\right)=
\sum_{i=0}\mu_i\e^{i+1}-\e x
\]

\vspace{0.1in}

Then, solving order by order provides at:

\vspace{0.1in}

\textbf{Order $\varepsilon^0$}

\[
F_0(x)= \dfrac{x^3}{3} - x.
\]

\textbf{Order $\varepsilon^1$}

\begin{equation}
\label{eq13} F_1(x) = - \dfrac{x - \mu_0}{x^2 - 1}
\end{equation}

\vspace{0.1in}

This function is singular at the fold point $x_0 = 1$ corresponding to the Hopf bifurcation point of the fast system\footnote{Due to the symmetry of the vector field: $(-x, -y, -\mu) \to (x, y, \mu)$ the same computation could have been done on the fold point $x_0 = -1$ in the vicinity of which a ``canard explosion'' also takes place.}. So, to avoid this singularity in function $F_1(x)$ we pose: $\mu_0 = 1$ and thus we have: $F_1(x) = \dfrac{-1}{1+x}$.

\vspace{0.1in}

\textbf{Order $\varepsilon^2$}

\begin{equation}
\label{eq14} F_2(x) = \dfrac{\mu_1 + \dfrac{(x - \mu_0)(x^2 + 1 -
2\mu_0x)}{(x^2 - 1)^3}}{x^2 - 1}
\end{equation}

\vspace{0.1in}

Taking into account that $\mu_0 = 1$ and, in order to avoid singularity in $F_2(x)$, we find that $\mu_1 = -\dfrac{1}{8}$ and so $F_2(x) = -\dfrac{x^2+4x+7}{8(1+x)^4}$.

\vspace{0.1in}

\textbf{Order $\varepsilon^3$}

\vspace{0.1in}

Using the same process, a tedious computation (or, better a computation with the help of a computer) leads to $\mu_2 = -\dfrac{3}{32}$, $\mu_3=-\dfrac{173}{1024}$. Thus, the bifurcation parameter value leading to canard solutions reads:

\[
\mu = 1 -\dfrac{1}{8} \varepsilon  -\dfrac{3}{32} \varepsilon^2  +
O\left( {\varepsilon ^3} \right)
\]

\vspace{0.1in}

Then, for $\varepsilon = 0.01$ one finds again the value obtained numerically by Beno\^{i}t \textit{et al.} \cite[p.99]{BenCallDinDin} and
Diener \cite{Diener}:

\[
\mu = 0.99874
\]

\newpage

The phenomenon of ``canard explosion'' of Van der Pol system (\ref{eq12}) with $\varepsilon = 0.01$ is exemplified on Fig. 1. where the periodic solution has been plotted in red, the critical manifold in black and the positive fixed point in green. Double arrows indicate the \textit{fast} motion while simple arrows indicate the \textit{slow} motion. Exponentially small variations of the parameter value $\mu = 0.99874$ enable to exhibit the transition from relaxation oscillation (a) to small amplitude limit cycles (b) via canard cycles (c). Then, at the parameter value $\mu = 1$ corresponding to the Hopf bifurcation, the canard disappears (d).

\begin{figure}[htbp]
  \begin{center}
    \begin{tabular}{ccc}
      \includegraphics[width=6cm,height=6cm]{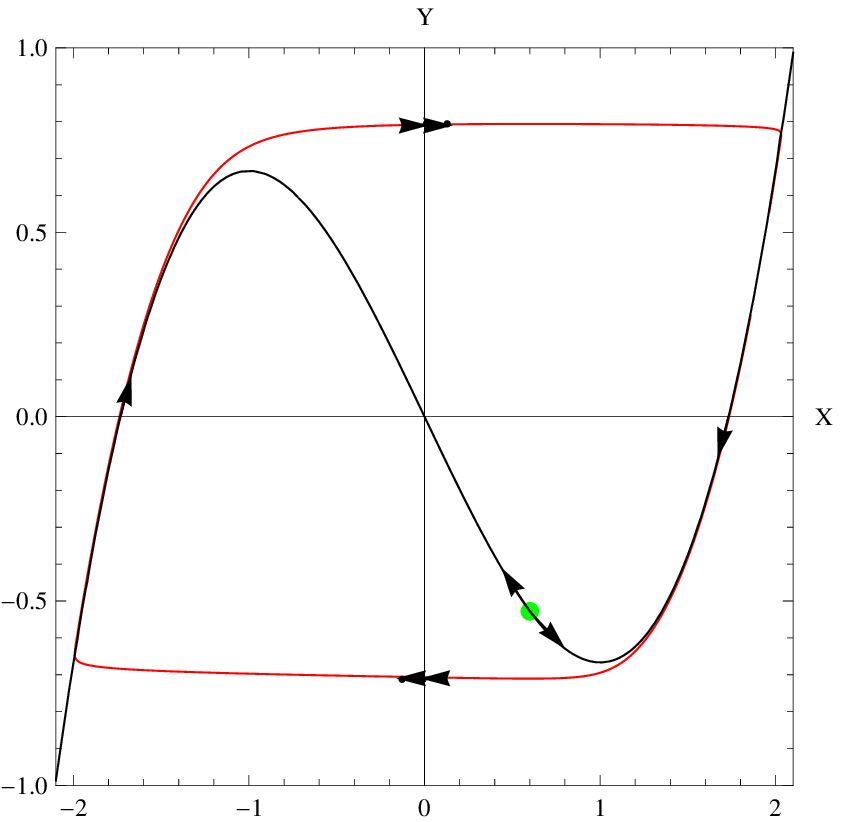} & ~~~~~~ &
      \includegraphics[width=6cm,height=6cm]{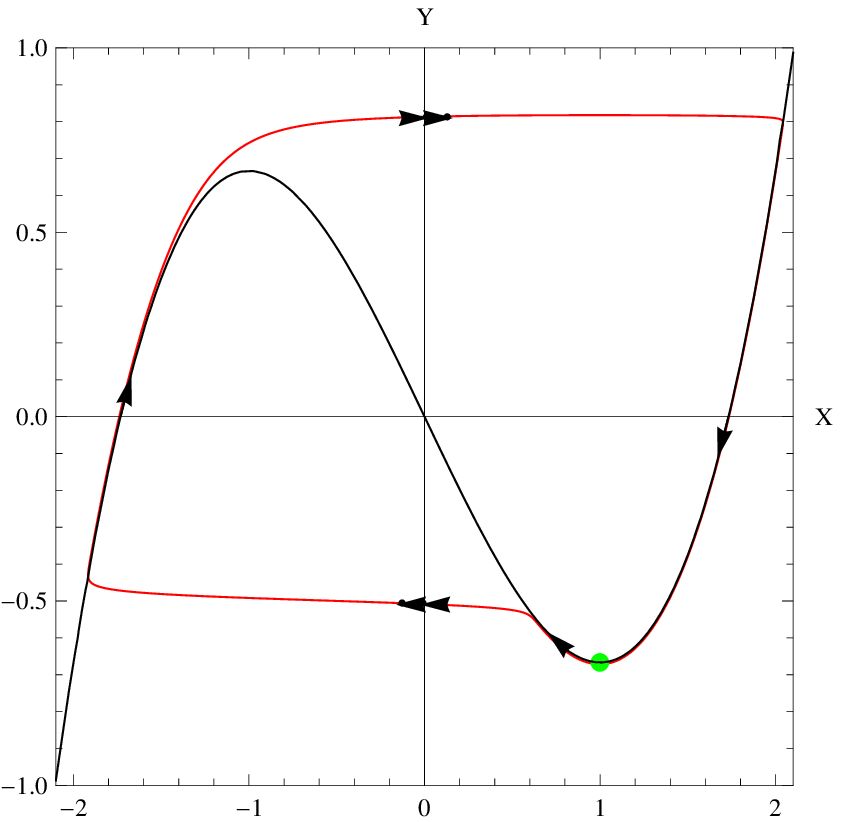} \\
      (a) $\mu < 0.99874$ & & (b) $\mu = 0.99874045$ \\[0.2cm]
      \includegraphics[width=6cm,height=6cm]{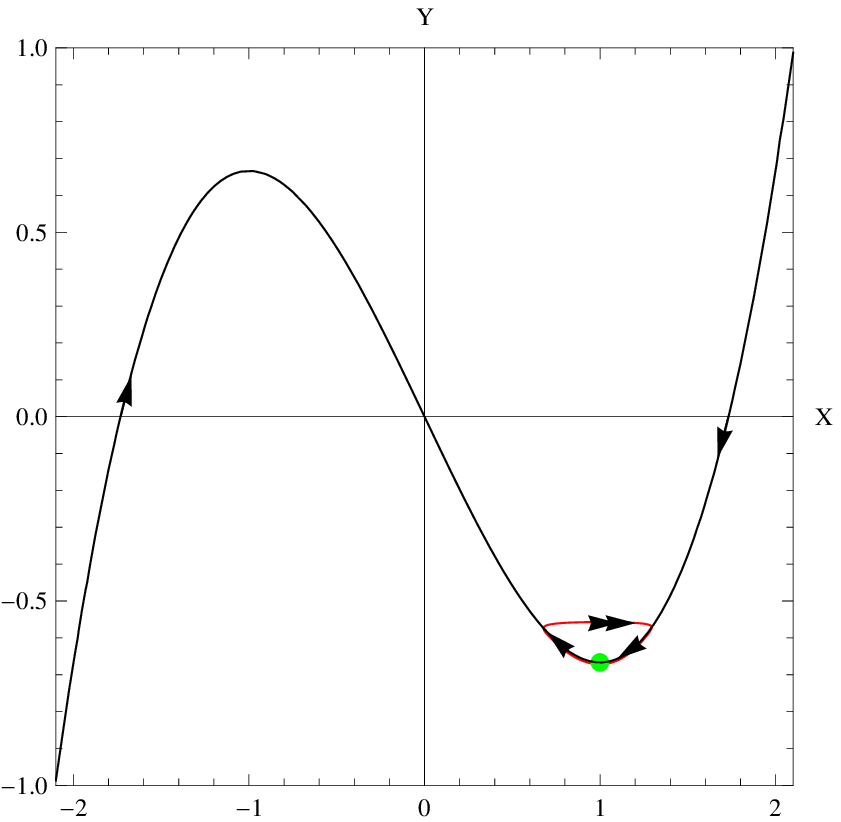} & ~~~ &
      \includegraphics[width=6cm,height=6cm]{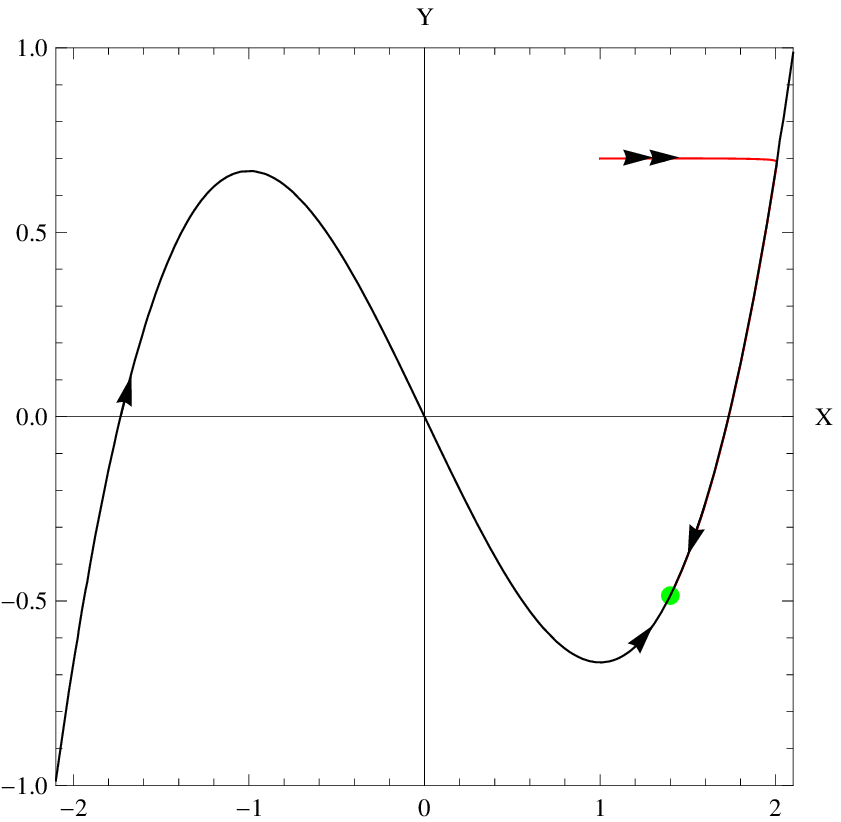} \\
      (c) $\mu = 0.998740451$ & & (d) $\mu > 1$ \\[-0.2cm]
    \end{tabular}
    \caption{Transition from relaxation oscillation to canard explosion.}
    \label{fig1}
  \end{center}
  \vspace{-0.5cm}
\end{figure}

\newpage

\section{Flow Curvature Method}

Recently, a new approach called \textit{Flow Curvature Method} and
based on the use of \textit{Differential Geometry} properties of
\textit{curvatures} has been developed, see \cite{GiRo3,Gin}.
According to this method, the highest \textit{curvature of the
flow}, i.e. the $(n - 1)^{th}$ \textit{curvature} of
\textit{trajectory curve} integral of $n$--dimensional dynamical
system defines a \textit{manifold} associated with this system and
called \textit{flow curvature manifold}. In the case of
$n$--dimensional singularly perturbed dynamical system (\ref{eq1}) for which with $\vec {x} \in \mathbb{R}^1$, $\vec {z} \in \mathbb{R}^{n-1}$, i.e. $(m,p)=(1,n-1)$ we have the following result.

\begin{proposition}
\label{prop3} The location of the points where the $(n - 1)^{th}$
\textit{curvature of the flow}, i.e. the \textit{curvature of the
trajectory curve} $\vec {X}$, integral of any $n$--dimensional
singularly perturbed dynamical system vanishes, represents its $(n -
1)$--dimensional \textit{slow manifold} $M_{\varepsilon}$ the
equation of which reads

\begin{equation}
\label{eq15} \phi ( {\vec {X}, \varepsilon} ) = \dot { \vec
{X} } \cdot ( {\ddot {\vec {X}} \wedge \dddot{\vec {X}}\wedge \ldots
\wedge \mathop {\vec {X}}\limits^{\left( n \right)} } ) = \det( {
\dot {\vec {X}},\ddot {\vec {X}}, \dddot{\vec {X}},\ldots ,\mathop
{\vec {X}}\limits^{\left( n \right)} } ) = 0
\end{equation}

where $\mathop {\vec {X}}\limits^{\left( n \right)}$ represents the
time derivatives up to order $n$ of $\vec {X}  = (\vec{x},
\vec{z})^t$.
\end{proposition}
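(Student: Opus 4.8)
The plan is to show that the determinant equation \eqref{eq15} and the invariance equation \eqref{eq5} select the same graph $\vec{x} = \vec{F}_0(\vec{z}) + \e\vec{F}_1(\vec{z}) + O(\e^2)$, so that the flow curvature manifold is $O(\e^2)$-close to $M_\e$. The first step is purely differential-geometric: for a regular curve $\vec{X}(t)$ in $\R^n$ the $(n-1)$th curvature is the quotient of $\det(\dot{\vec{X}}, \ddot{\vec{X}}, \ldots, \vec{X}^{(n)})$ by a product of norms of the lower osculating wedge products, a denominator that is strictly positive wherever $\dot{\vec{X}}, \ldots, \vec{X}^{(n-1)}$ are independent. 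Hence on the regular part of the trajectory the highest curvature vanishes exactly on the zero set of $\phi$, and the statement reduces to analysing $\phi(\vec{X},\e)=0$.

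Next I would compute the successive derivatives through the field. Working in fast time, $\vec{X}' = (f, \e\vec{g})^t$ and each further derivative is obtained by applying $D = L_0 + \e L_1$, where $L_0 = f(\vec{x},\vec{z},0)\,\p_x$ generates the frozen fast flow and $L_1 = \vec{g}\cdot\nabla_{\vec{z}}$ is the slow drift. The decisive structural fact is twofold: every slow row of the $n\times n$ matrix $[\vec{X}',\ldots,\vec{X}^{(n)}]$ carries a factor $\e$, so $\phi = \e^{\,n-1}\psi$; and, because $L_0=f\,\p_x$ produces an explicit factor $f$ at each application while $L_1$ does not, the coefficient of $\e^{k}$ in $\psi$ is divisible by $f^{\,n-1-k}$ for $0\le k\le n-1$. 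I would establish both claims by induction on the order of differentiation.

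Granting this, the $\e^0$ coefficient of $\psi$ equals $f^{\,n-1}W_0$ with $W_0$ a Wronskian-type determinant that is generically non-zero on $M_0$; thus $\phi = 0$ forces $f=0$ at leading order, which is precisely \eqref{eq6} and fixes $\vec{F}_0$. To reach the next order I would substitute $\vec{x}=\vec{F}_0+\e\vec{F}_1+O(\e^2)$, so that $f=O(\e)$ on the manifold with $\e$-coefficient linear in $\vec{F}_1$, and collect terms. Because the $\e^k$ coefficient of $\psi$ carries the factor $f^{\,n-1-k}$, every contribution enters at the common order $\e^{\,n-1}$, and the equation $\psi=0$ reduces to a single polynomial relation of degree $n-1$ in the leading correction $\vec{F}_1$, its coefficients being $W_0,\tilde\psi_1,\ldots,\tilde\psi_{n-1}$ evaluated on $M_0$.

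The main obstacle is precisely this last relation. In dimension two it is linear, with the unique root reproducing the value of $\vec{F}_1$ obtained from \eqref{eq8}, so the flow curvature manifold and $M_\e$ agree through $O(\e^2)$; this is the case relevant to the Van der Pol application. In higher dimension the degree-$(n-1)$ polynomial has several branches, and the work is to identify the branch corresponding to the attracting/repelling slow manifold and to verify that its expansion coincides with the Fenichel scheme rather than with a spurious root. I would handle this by comparing, order by order, the coefficient extracted from the determinant with the one produced by \eqref{eq5}, the organisation of the subleading determinant terms --- which mix the $O(\e^2)$ parts of the slow rows with the $O(\e)$ shift of the evaluation point --- being the genuinely laborious part of the argument.
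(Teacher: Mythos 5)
The paper does not actually prove Proposition~\ref{prop3} in the text: its ``proof'' is a pointer to \cite[p.~185]{Gin} together with the material that follows, namely the Darboux invariance criterion of Proposition~\ref{prop4} and the two--dimensional extraction of the coefficients $F_i$ from the total differential $d\phi_i=0$ via $a_{10}$ and $a_{01}$. Your route is genuinely different: you expand $\det(\dot{\vec X},\ddot{\vec X},\ldots,\vec X^{(n)})$ directly, factor $\e^{\,n-1}$ out of the $p=n-1$ slow rows, and use multilinearity of the determinant together with the structure of $L_0=f\,\p_x$ to show that the coefficient of $\e^k$ in the reduced quantity $\psi$ is divisible by $f^{\,n-1-k}$. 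That divisibility claim is correct (in the $\e^k$ coefficient at most $k$ of the columns $2,\ldots,n$ can absorb a positive power of $\e$, and each of the remaining $n-1-k$ retains the factor $f$ produced by $L_0$), and in dimension two it gives a clean, self-contained verification that the zero set of $\phi$ is a graph agreeing with Fenichel's $F_0+\e F_1$ through $O(\e^2)$ --- arguably more transparent than the paper's Lie-derivative/total-differential scheme, whose advantage is that it produces all orders $F_i$ recursively.

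The gap is in the general $n$--dimensional claim, which is what the proposition asserts. After substituting $\vec x=\vec F_0+\e\vec F_1+O(\e^2)$, your equation $\psi=0$ becomes at the common order $\e^{\,n-1}$ a polynomial of degree $n-1$ in $\vec F_1$; you acknowledge that one must single out the root coinciding with the Fenichel coefficient and discard the others, but you do not carry this out, and it is not routine: a priori the flow curvature manifold could contain extra sheets $O(\e)$--close to $M_0$ that are not slow invariant manifolds, in which case the proposition would only hold for one component of the zero set. A cheaper way to close at least half of the statement would be to verify directly that the Fenichel expansion annihilates $\psi$ to the required order (containment of $M_\e$ in the flow curvature manifold up to $O(\e^2)$), leaving the converse to the branch analysis; as written, your argument is complete only for $n=2$. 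One further point, which affects the paper's presentation as much as yours: the leading coefficient $W_0$ you divide by reduces on $M_0$ to a multiple of $D_xf$, so it vanishes exactly at the fold points where the canard analysis of the rest of the paper takes place; the ``generically non-zero'' hypothesis therefore excludes the region of main interest, and a separate local argument is needed there.
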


\begin{proof}
For proof of this proposition see \cite[p. 185 and next]{Gin} and below.
\end{proof}

\begin{remark}
First, let's notice that with the Flow Curvature Method the slow manifold is defined by
an implicit equation. Secondly, in the most general case of $n$--dimensional singularly perturbed dynamical system (\ref{eq1}) for which $\vec {x} \in \mathbb{R}^m$, $\vec {z} \in \mathbb{R}^p$ the Proposition \ref{prop3} still holds. In dimension three, the example of a Neuronal Bursting Model (NBM) for which $(m,p)=(2,1)$ has already been studied by Ginoux \textit{et al.} \cite{GiRo2}. In this particular case, one of the hypotheses of the Tihonov's theorem is not checked since the fast dynamics of the singular approximation has a periodic solution. Nevertheless, it has been established by Ginoux \textit{et al.} \cite{GiRo2} that the slow manifold can all the same be obtained while using the Flow Curvature Method. According to this method, the slow invariant manifold of a three-dimensional singularly perturbed dynamical system for which $(m,p)=(1,2)$ is given by the $2^{nd}$ curvature of the flow, i.e. the torsion. In the case of a Neuronal Bursting Model for which $(m,p)=(2,1)$ it has been stated by Ginoux \textit{et al.} \cite{GiRo2} that the slow manifold is then given by the $1^{st}$ curvature of the flow, i.e. the curvature. In such a case, the flow curvature manifold is defined by the location of the points where the three-dimensional pseudovector $\dot{\vec{X}} \wedge \ddot{\vec{X}}$ vanishes. This condition leads to a nonlinear system of three equations two of which being linearly independent. These two equations define a curve corresponding to the slow invariant manifold\footnote{See also Gilmore \textit{et al.} \cite{Gilmore}}. Thus, one can deduce that for a three-dimensional singularly perturbed dynamical system for which $(m,p)$ the slow manifold is given by the $p^{th}$ curvature of the flow.
\end{remark}

\subsection{Invariance}

According to Schlomiuk \cite{Schlomiuk} and Llibre \textit{et al.}
\cite{LLibreMedrado} the concept of \textit{invariant manifold} has
been originally introduced by Gaston Darboux \cite[p. 71]{Darboux}
in a memoir entitled: \textit{Sur les \'{e}quations diff\'{e}rentielles
alg\'{e}briques du premier ordre et du premier degr\'{e}} and can be stated
as follows.

\begin{proposition}\label{prop4}
The \textit{manifold} defined by $\phi ( \vec {X}, \varepsilon) = 0$
where $\phi $ is a $C^1$ in an open set U, is \textit{invariant}
with respect to the flow of \eqref{eq1} if there exists a $C^1$
function denoted by $\kappa ( \vec {X}, \varepsilon)$ and called
cofactor which satisfies

\begin{equation}
\label{eq16}
L_{\overrightarrow V } \phi ( \vec {X}, \varepsilon) = \kappa( \vec
{X}, \varepsilon) \phi ( \vec {X}, \varepsilon),
\end{equation}

for all $\vec {X} \in U$, and with the Lie derivative operator
defined as

\[
L_{\overrightarrow V } \phi = \overrightarrow V \cdot
\overrightarrow \nabla \phi = \sum\limits_{i = 1}^n {\frac{\partial
\phi }{\partial x_i }\dot {x}_i } = \frac{d\phi }{dt}.
\]

\end{proposition}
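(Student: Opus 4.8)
The plan is to exploit the final identity in the statement, namely that the Lie derivative $L_{\overrightarrow{V}}\phi$ coincides with the total time derivative $d\phi/dt$ taken along trajectories of the flow. Writing the vector field of \eqref{eq1} as $\overrightarrow{V} = (\vec{f}, \varepsilon\vec{g})^t$, so that a solution satisfies $\dot{\vec{X}} = \overrightarrow{V}(\vec{X}, \varepsilon)$, I would first fix an arbitrary trajectory $\vec{X}(t)$ of the system whose initial condition $\vec{X}(t_0)$ lies inside $U$, and introduce the scalar function $\psi(t) := \phi(\vec{X}(t), \varepsilon)$. The point of this construction is that $\psi$ records the value of $\phi$ seen by the moving point, so that the manifold $\{\phi = 0\}$ is invariant precisely when $\psi(t_0)=0$ forces $\psi(t)=0$ for all $t$.

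Next I would differentiate $\psi$ by the chain rule, which by the very definition of the Lie derivative gives $\dot{\psi}(t) = \sum_{i=1}^n (\partial \phi / \partial x_i)\dot{x}_i = L_{\overrightarrow{V}}\phi(\vec{X}(t),\varepsilon)$. Invoking the cofactor hypothesis \eqref{eq16} then yields the scalar linear ordinary differential equation
\[
\dot{\psi}(t) = \kappa(\vec{X}(t),\varepsilon)\,\psi(t),
\]
whose coefficient is the $C^1$ function $\tilde{\kappa}(t) := \kappa(\vec{X}(t),\varepsilon)$ multiplying $\psi$ itself. This is the crux of the argument: the existence of a cofactor converts the geometric invariance question into a homogeneous linear first-order ODE for $\psi$.

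I would then integrate this equation explicitly, obtaining
\[
\psi(t) = \psi(t_0)\,\exp\!\left( \int_{t_0}^{t} \kappa(\vec{X}(s),\varepsilon)\, ds \right).
\]
Since the exponential factor is strictly positive and never vanishes, the zero set of $\psi$ is preserved along the orbit. Hence if $\vec{X}(t_0)$ lies on the manifold, so that $\psi(t_0) = \phi(\vec{X}(t_0),\varepsilon) = 0$, then $\psi(t) \equiv 0$, i.e. $\phi(\vec{X}(t),\varepsilon) = 0$ for every $t$ in the maximal interval of existence on which the orbit stays in $U$ (in both time directions). As the trajectory was arbitrary, the manifold $\{\phi = 0\}$ is invariant under the flow of \eqref{eq1}.

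The argument is short, and the only points requiring care are structural rather than computational: one must guarantee that the trajectory through each point of the manifold is well defined, which is ensured by the $C^\infty$ (or $C^r$, $r\geq 1$) regularity of $\vec{f}$ and $\vec{g}$ assumed in $(H_1)$, and one must restrict to the portion of the orbit remaining in the open set $U$ on which $\phi$ and $\kappa$ are defined. The main conceptual step -- and the only place where the cofactor is genuinely used -- is the passage to the linear equation $\dot{\psi} = \tilde{\kappa}\,\psi$, after which the non-vanishing of the integrating exponential does all the work.
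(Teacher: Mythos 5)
Your proof is correct, and it is in fact the classical proof of Darboux's invariance criterion: along any orbit the cofactor identity \eqref{eq16} collapses to the scalar linear homogeneous equation $\dot{\psi}=\tilde{\kappa}(t)\psi$, and the integrating-factor (or Gronwall) argument shows the zero level set of $\phi$ is carried into itself by the flow. This proves the proposition as literally stated, in full generality, for an arbitrary $C^1$ function $\phi$. The paper's own proof takes a genuinely different route: it does not argue from the cofactor equation at all, but instead specializes $\phi$ to the graph form $\phi(\vec{x},\vec{z},\varepsilon)=\vec{x}-\vec{F}(\vec{z},\varepsilon)$ furnished by Fenichel's Persistence Theorem, computes the Lie derivative explicitly using system \eqref{eq2}, and observes that on the set $\{\phi=0\}$ this Lie derivative reduces exactly to Fenichel's invariance equation \eqref{eq17}. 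In other words, the paper is really establishing the \emph{consistency} of the Darboux formulation with the Geometric Singular Perturbation formulation for the particular manifold $M_\varepsilon$, rather than proving the abstract criterion; your argument proves the criterion itself. What your approach buys is a complete and self-contained justification that the cofactor condition implies invariance (the only place the cofactor is genuinely used being the passage to the linear ODE); what the paper's approach buys is the explicit bridge to Fenichel's equation \eqref{eq5}, which is the point the authors actually need for the rest of the article. The one technical point you should make explicit is the uniqueness step: from $\dot{\psi}=\tilde{\kappa}\psi$ you should note that $\frac{d}{dt}\bigl[\psi(t)\,e^{-\int_{t_0}^{t}\tilde{\kappa}(s)\,ds}\bigr]=0$, which justifies the exponential representation and hence the conclusion that $\psi(t_0)=0$ forces $\psi\equiv 0$ on the maximal interval in $U$.
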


\begin{proof}
According to \textit{Fenichel's Persistence Theorem} (see Th.
\ref{th3.1}) the \textit{slow invariant manifold} $M_{\varepsilon}$
may be written as an \textit{explicit} function $\vec{x} = \vec
{F}\left( {\vec {z},\varepsilon } \right)$, the invariance of which
implies that $\vec {F}\left( {\vec {z},\varepsilon } \right)$
satisfies

\begin{equation}
\label{eq17}
\varepsilon D_{\vec {z}} \vec {F}\left( {\vec {z},\varepsilon } \right)\vec
{g}\left( {\vec {F}\left( {\vec {z},\varepsilon } \right),\vec
{z},\varepsilon } \right)=\vec {f}\left( {\vec {F}\left( {\vec
{z},\varepsilon } \right),\vec {z},\varepsilon } \right)
\end{equation}

We write the \textit{slow manifold} $M_{\varepsilon}$ as an
\textit{implicit function} by posing

\begin{equation}
\label{eq18}
\phi(\vec {x},\vec {z},\varepsilon ) = \vec{x} - \vec {F}\left( \vec
{z},\varepsilon  \right) = \phi(\vec {X},\varepsilon ).
\end{equation}

According to \textit{Darboux invariance theorem} $M_{\varepsilon}$
is invariant if its Lie derivative reads

\begin{equation}
\label{eq19}
L_{\vec V } \phi (\vec {X},\varepsilon ) = \kappa(\vec
{X},\varepsilon ) \phi (\vec {X},\varepsilon ).
\end{equation}

Plugging Eq. (\ref{eq18}) into the Lie derivative (\ref{eq19}) leads
to

\[
L_{\vec V } \phi (\vec {X},\varepsilon ) = \dot{\vec{x}} - D_{\vec
{z}}\vec {F} ( \vec {z},\varepsilon ) \dot{\vec{z}} = \kappa(\vec
{X},\varepsilon ) \phi (\vec {X},\varepsilon ),
\]

which may be written according to Eq. (\ref{eq2}) as

\[
L_{\vec V } \phi (\vec {X},\varepsilon ) = \frac{1}{\varepsilon}(
\vec{f}(\vec {X},\varepsilon ) - \varepsilon D_{\vec {z}}\vec {F} (
{\vec {z},\varepsilon }) \vec{g}(\vec {X},\varepsilon ) ) =
\kappa(\vec {X},\varepsilon ) \phi (\vec {X},\varepsilon).
\]

Evaluating this Lie derivative in the location of the points where
$\phi(\vec {X},\varepsilon ) = 0$, i.e. $\vec{x} = \vec {F}( \vec
{z},\varepsilon )$ leads to

\[
L_{\vec V } \phi ( \vec {F}\left( \vec {z},\varepsilon \right),\vec
{z},\varepsilon ) = \frac{1}{\varepsilon}\left( \vec{f}(\vec
{F}\left( \vec {z},\varepsilon  \right),\vec{z},\varepsilon) -
\varepsilon D_{\vec {z}}\vec {F}\left( {\vec {z},\varepsilon }
\right) \vec{g}(\vec {F}\left( \vec {z},\varepsilon
\right),\vec{z},\varepsilon) \right) = 0,
\]

which is exactly identical to Eq. (\ref{eq18}) used by
Fenichel\index{Fenichel}.
\end{proof}

\begin{remark}
This last equation for the invariance of the manifold
$M_\varepsilon$ may be written in a simpler way which implies that
$\phi ( {\vec {x}, \vec {z},\varepsilon} )$ satisfies

\begin{equation}
\label{eq20} \dfrac{d}{dt} \left[ \phi ( {\vec {x}, \vec
{z},\varepsilon} ) \right] = 0,
\end{equation}

on the solutions of the differential system.
\end{remark}

\subsection{Slow invariant manifold}

We consider again the following two-dimensional \textit{singularly
perturbed dynamical system}

\[
\begin{aligned}
\varepsilon \dot {x} & = f(x,y), \\
\dot{y} & = g(x,y),  \\
\end{aligned}
\]

and we suppose that due to the nature of the problem perturbation
expansion reads

\[
y = F(x, \varepsilon) = F_0(x) + \varepsilon F_1(x) + \varepsilon^2
F_2(x) + O\left( \varepsilon^3 \right).
\]

According to the \textit{Flow Curvature Method} each function
$\vec{F_i}(\vec{z})$ of this perturbation expansion may be found
again starting from the \textit{slow manifold implicit equation}
(\ref{eq16}) as stated in the next result.

\begin{proposition}
The functions $F_i(x)$ of the \textit{slow invariant manifold}
associated with a two-dimensional singularly perturbed dynamical
system are given by the following expressions

\begin{equation}
\label{eq21} \begin{array}{*{20}c} F'_n (x) =\mathop
{\lim}\limits_{\varepsilon \to 0} \left[ {\dfrac{1}{n!}
\dfrac{\partial ^na_{10}}{\partial \varepsilon^n}} \right] \mbox{ with } n\ge 0, \vspace{4pt} \\
F_n (x) =\mathop {\lim}\limits_{\varepsilon \to 0} \left[
{\dfrac{1}{n!} \dfrac{\partial ^{n-1}a_{01}}{\partial
\varepsilon^{n-1}}} \right] \mbox { with } n\ge 1,
\end{array}
\end{equation}

where

\[
a_{10} =-\left. {\dfrac{\dfrac{\partial \phi_i}{\partial
x}}{\dfrac{\partial \phi_i}{\partial y}}} \right|_{y = F\left(
{x,\varepsilon }\right)} \quad \mbox{and} \qquad a_{01} =-\left.
{\dfrac{\dfrac{\partial \phi_i}{\partial
\varepsilon}}{\dfrac{\partial \phi_i}{\partial y}}} \right|_{y =
F\left( {x,\varepsilon }\right)},
\]

and

\[
\phi_i(x,y,\varepsilon) =
\frac{d^{i-1}}{dt^{i-1}}[\phi_1(x,y,\varepsilon)] \quad \mbox{with}
\quad \mbox{i = 1, 2, ..., n},
\]

where $\phi_i(x,y,\varepsilon)$ corresponds to the $i^{th}$ order
approximation in $\varepsilon$.

\end{proposition}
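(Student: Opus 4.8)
The plan is to reduce the claim to two standard ingredients applied to the implicit curvature equation $\phi_i(x,y,\varepsilon)=0$: the implicit function theorem, which identifies $a_{10}$ and $a_{01}$ with the partial derivatives of the explicit slow manifold $y=F(x,\varepsilon)$, and a Maclaurin expansion in $\varepsilon$, which reads off the Taylor coefficients $F_n(x)$. First I would fix the order $i$ and treat $\phi_i(x,y,\varepsilon)=0$ as defining $y$ as a function of $x$ and $\varepsilon$. Since $\partial\phi_i/\partial y\neq0$ along the manifold (the non-degeneracy needed for the implicit function theorem, available away from the fold by normal hyperbolicity), differentiating the identity $\phi_i(x,F(x,\varepsilon),\varepsilon)\equiv0$ with respect to $x$ and to $\varepsilon$ yields
\[
\frac{\partial F}{\partial x}=-\left.\frac{\partial\phi_i/\partial x}{\partial\phi_i/\partial y}\right|_{y=F(x,\varepsilon)}=a_{10},\qquad \frac{\partial F}{\partial\varepsilon}=-\left.\frac{\partial\phi_i/\partial\varepsilon}{\partial\phi_i/\partial y}\right|_{y=F(x,\varepsilon)}=a_{01},
\]
which is precisely how $a_{10}$ and $a_{01}$ are defined.

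Next I would substitute the perturbation expansion $F(x,\varepsilon)=\sum_{k\ge0}F_k(x)\varepsilon^k$ and differentiate termwise, obtaining $a_{10}=\sum_{k\ge0}F_k'(x)\varepsilon^k$ and $a_{01}=\sum_{k\ge1}kF_k(x)\varepsilon^{k-1}$. Taylor's theorem then identifies the coefficients: the coefficient of $\varepsilon^n$ in $a_{10}$ is
\[
\lim_{\varepsilon\to0}\frac{1}{n!}\frac{\partial^n a_{10}}{\partial\varepsilon^n}=F_n'(x),
\]
which is the first formula, while the coefficient of $\varepsilon^{n-1}$ in $a_{01}$ is
\[
\lim_{\varepsilon\to0}\frac{1}{(n-1)!}\frac{\partial^{n-1}a_{01}}{\partial\varepsilon^{n-1}}=nF_n(x),
\]
and dividing by $n$ converts $1/(n-1)!$ into $1/n!$, giving the second formula. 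The two expressions are mutually consistent: the first determines $F_n$ only up to an additive constant, and the second supplies that constant.

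The step I expect to be the main obstacle is justifying the clause that $\phi_i$ \emph{corresponds to the $i$-th order approximation in $\varepsilon$}, i.e. that replacing $\phi_1$ by its successive total time derivatives $\phi_i=\frac{d^{i-1}}{dt^{i-1}}[\phi_1]$ raises the order to which the curvature manifold matches the exact slow invariant manifold $M_\varepsilon$. Here I would lean on the Darboux invariance criterion of Proposition \ref{prop4}: on a genuinely invariant manifold one has $\frac{d\phi}{dt}=\kappa\phi$, so every total time derivative of $\phi$ vanishes there as well. Because the curvature manifold $\phi_1=0$ is invariant only up to $O(\varepsilon^2)$, and because each $d/dt$ carries a factor $1/\varepsilon$ from the fast equation $\varepsilon\dot x=f$, every differentiation trades one order of the invariance residual for one further matched order in the expansion of $y=F(x,\varepsilon)$. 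I would make this precise by an induction showing that solving $\phi_i=0$ for $y$ reproduces $\sum_{k=0}^{i}F_k(x)\varepsilon^k$ correctly, so that extracting $F_n$ legitimately requires $\phi_n$. This order-bookkeeping, rather than the implicit-function and Taylor steps, is where the genuine work lies; the Van der Pol calculation of the preceding section, which recovers $F_0,F_1,F_2$ and the canard values $\mu_0,\mu_1,\mu_2$, serves as the concrete check that the scheme is correct.
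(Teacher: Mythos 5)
Your proposal follows essentially the same route as the paper: the identities $\partial F/\partial x = a_{10}$ and $\partial F/\partial\varepsilon = a_{01}$ are obtained there by writing the total differential of $\phi_i(x,y,\varepsilon)=0$ and substituting $dy=\frac{\partial F}{\partial x}dx+\frac{\partial F}{\partial\varepsilon}d\varepsilon$, which is the implicit function theorem in different clothing, and the coefficients $F_n$ are then read off from the $\varepsilon$-expansion of $a_{10}$ and $a_{01}$ exactly as you do (the paper compresses your Taylor-coefficient step into the phrase ``by a recurrence reasoning''). The one point you single out as the main obstacle --- that $\phi_i=\frac{d^{i-1}}{dt^{i-1}}[\phi_1]$ furnishes the $i$-th order approximation --- is simply asserted in the paper without justification, so your account is, if anything, more explicit than the original about where the genuine work lies.
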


\begin{proof}

We have that

\[
\phi ( {\vec {X}, \varepsilon} )=\left\| {\dot {\vec {X}}\wedge
\ddot {\vec {X}}} \right\| = \det( { \dot {\vec {X}},\ddot {\vec
{X}} } ) = 0 \quad \Leftrightarrow \quad \phi \left(
{x,y,\varepsilon } \right)=0.
\]

Since, for a two-dimensional singularly perturbed dynamical systems
this \textit{slow manifold equation} is defined by the
\textit{second order tensor of curvature}, i.e. by a determinant
involving the first and second time derivatives of the vector field
$\vec {X}$, it corresponds to the first order approximation in
$\varepsilon$ of the \textit{slow manifold} obtained with the
\textit{Geometric Singular Perturbation Method}. So, we denote it by

\[
\phi_1 ( {\vec {X}, \varepsilon} )=\left\| {\dot {\vec {X}}\wedge
\ddot {\vec {X}}} \right\| = \det( { \dot {\vec {X}},\ddot {\vec
{X}} } ) = 0.
\]

A \textit{third order tensor of curvature} can be easily given by
the time derivative of $\phi_1 ( {\vec {X}, \varepsilon} )$. We
denote it by

\[
\phi_2 ( {\vec {X}, \varepsilon} )= \dot \phi_1 ( {\vec {X},
\varepsilon} ) = \det( { \dot {\vec {X}},\dddot {\vec {X}}}) = 0.
\]

Thus, $\phi_2 ( {\vec {X}, \varepsilon} )$ corresponds to the second
order approximation in $\varepsilon$. Using the same process, we
consider the \textit{slow manifold} $\phi_i( {\vec {X}, \varepsilon}
)$ which corresponds to the $i^{th}$ order approximation in
$\varepsilon$.

Writing the \textit{total differential} of the \textit{slow
manifold} we obtain

\begin{equation}
\label{eq22} d\phi_i \left( {x,y,\varepsilon }
\right)=\dfrac{\partial \phi_i }{\partial x}dx+\dfrac{\partial
\phi_i }{\partial y}dy + \dfrac{\partial \phi_i}{\partial
\varepsilon} d\varepsilon =0.
\end{equation}

Replacing in Eq. (\ref{eq23}) $dy$ by its \textit{total
differential} $dy = \dfrac{\partial F}{\partial x} dx +
\dfrac{\partial F}{\partial \varepsilon} d\varepsilon$ yields

\begin{equation}
\label{eq23} d\phi_i(x,y,\varepsilon) = \left( \dfrac{\partial
\phi_i}{\partial x} + \dfrac{\partial \phi_i}{\partial
y}\dfrac{\partial F}{\partial x}  \right)dx + \left( \dfrac{\partial
\phi_i}{\partial \varepsilon} + \dfrac{\partial \phi_i}{\partial
y}\dfrac{\partial F}{\partial \varepsilon}  \right) d\varepsilon.
\end{equation}

According to Eq. (\ref{eq21}) $\phi_i(x,y,\varepsilon)$ is
\textit{invariant} if and only if $d\phi_i(x,y,\varepsilon) = 0$,
i.e. if

\begin{equation}
\label{eq24}
\begin{aligned}
& \dfrac{\partial \phi_i}{\partial x} + \dfrac{\partial
\phi_i}{\partial y}\dfrac{\partial F}{\partial x} = 0 \quad
\Leftrightarrow \quad \dfrac{\partial F}{\partial x} = - \dfrac{
\dfrac{\partial \phi_i}{\partial x} }{ \dfrac{\partial
\phi_i}{\partial y} },\\ & \dfrac{\partial \phi_i}{\partial
\varepsilon} + \dfrac{\partial \phi_i}{\partial y}\dfrac{\partial
F}{\partial \varepsilon} = 0 \quad \Leftrightarrow \quad
\dfrac{\partial F}{\partial \varepsilon} = - \dfrac{ \dfrac{\partial
\phi_i}{\partial \varepsilon} }{ \dfrac{\partial \phi_i}{\partial y}
}.
\end{aligned}
\end{equation}

By replacing $y=F\left( x, \varepsilon \right)$ by its expression in
both parts of Eq. (\ref{eq25}) and by setting

\[
a_{10} =-\left. {\dfrac{\dfrac{\partial \phi_i}{\partial
x}}{\dfrac{\partial \phi_i}{\partial y}}} \right|_{y = F\left(
{x,\varepsilon }\right)} \quad \mbox{and} \qquad a_{01} =-\left.
{\dfrac{\dfrac{\partial \phi_i}{\partial
\varepsilon}}{\dfrac{\partial \phi_i}{\partial y}}} \right|_{y =
F\left( {x,\varepsilon }\right)},
\]

we have that

\begin{equation}
\label{eq25} \begin{array}{*{20}c} \dfrac{\partial F\left(
{x,\varepsilon } \right)}{\partial x} = F'_0 (x) + \mbox{
}\varepsilon F'_1(x) +O\left( {\varepsilon^2} \right)= a_{10},
\vspace{4pt}  \\ \dfrac{\partial F\left( {x,\varepsilon }
\right)}{\partial \varepsilon } = F_1(x) + 2\varepsilon F_2(\e)
+O\left(
{\varepsilon^2} \right)= a_{01}.  \\
\end{array}
\end{equation}

By using a \textit{recurrence reasoning} it may be easily stated
that the functions $F_i(x)$ of the \textit{slow invariant manifold}
associated with a two-dimensional singularly perturbed dynamical
system are given by the expressions (\ref{eq21}).
\end{proof}

\subsection{Van der Pol's ``canards''}

We consider again the Van der Pol system (\ref{eq12}). All functions
$F_i(x)$ of the perturbation expansion may be deduced from the
\textit{slow manifold} equation defined by (\ref{eq16}). But, since
the determination of $F_3(x)$, i.e. the computation $\mu_2$ requires
a \textit{third order tensor of curvature} we consider $\phi_3 (
{\vec {X}, \varepsilon} )$ the second time derivative of $\phi_1 (
{\vec {X}, \varepsilon} )$ which corresponds to the third order
approximation in $\varepsilon$.

\newpage

Thus, we find at

\vspace{0.1in}

\textbf{Order $\varepsilon^0$}

\[
F'_0 (x) =\mathop {\lim}\limits_{\varepsilon \to 0} \left[ a_{10}
\right] = -1 +x^2,
\]

from which one deduces that

\[
F_0(x) = \dfrac{x^3}{3} - x + C_0,
\]

where the constant $C_0$ may be chosen in such a way that the
\textit{critical manifold} can be found again ($C_0 = 0$).

\vspace{0.1in}

\textbf{Order $\varepsilon^1$}

\begin{equation}
\label{eq26} F_1 (x) = \mathop {\lim}\limits_{\varepsilon \to 0}
\left[ a_{01} \right] = \dfrac{\mu_0 - x}{x^2 - 1}.
\end{equation}

Thus, one find again exactly the same functions $F_1(x)$ as those
given by \textit{Geometric Singular Perturbation Method}
(\ref{eq14}) and of course the same value of the bifurcation
parameter $\mu_0 = 1$.

\vspace{0.1in}

\textbf{Order $\varepsilon^2$}

\begin{equation}
\label{eq27} F_2(x) = \dfrac{\mu_1 + \dfrac{(x - \mu_0)(x^2 + 1 -
2\mu_0x)}{(x^2 - 1)^3}}{x^2 - 1}.
\end{equation}

Taking into account that $\mu_0 = 1$ we find again exactly the same
functions $F_2(x)$ as those given by \textit{Geometric Singular
Perturbation Method} (\ref{eq15}) and of course the same value of
the bifurcation parameter $\mu_1 = -1/8$.

\smallskip

\textbf{Order $\varepsilon^3$}

A simple and direct computation leads to $\mu_2 = -\dfrac{3}{32}$.
Thus, the bifurcation parameter value leading to canard solutions
reads

\[
\mu = 1 -\dfrac{1}{8} \varepsilon  -\dfrac{3}{32} \varepsilon^2  +
O\left( {\varepsilon ^3} \right).
\]

Then, for $\varepsilon = 0.01$ one finds again the value obtained by
Beno\^{i}t \textit{et al.} \cite[p.99]{BenCallDinDin} and Diener
\cite{Diener}

\[
\mu = 0.99874...
\]

A program made with Mathematica and available at:
{http://ginoux.univ-tln.fr} enables to compute all order of
approximations in $\varepsilon$ of any two-dimensional singularly
perturbed systems.

\section{Conclusion}

Thus, the bifurcation parameter value leading to a \textit{canard explosion} in dimension two obtained by the so-called \textit{Geometric Singular Perturbation
Method} has been found again with the \textit{Flow Curvature Method}. This result could be also extended to three-dimensional singularly perturbed dynamical systems such as the 3D-autocatalator in which canard phenomenon occurs.

\section*{Acknowledgments}

Authors would like to thank Professor Eric Beno\^{i}t and the referees for their fruitful advices.

The second author is supported by the grants MIICIN/FEDER MTM
2008--03437, AGAUR 2009SGR410, and ICREA Academia.

\end{document}